\newtheorem{theorem}{Theorem}[section]
\newtheorem{corollary}[theorem]{Corollary}
\theoremstyle{definition}
\newtheorem{definition}[theorem]{Definition}
\newtheorem{example}[theorem]{Example}
\theoremstyle{remark}
\newtheorem{remark}[theorem]{Remark}
\numberwithin{equation}{section}
\begin{document}
\date{{\scriptsize Received: , Accepted: .}}
\title[Fixed-disc results via simulation functions]{Fixed-disc results via
simulation functions }
\subjclass[2010]{54H25; 47H09; 47H10; 54C30;46T99.}
\keywords{Fixed disc, simulation function, metric space.}

\begin{abstract}
In this paper, our aim is to obtain new fixed-disc results on metric spaces.
To do this, we present a new approach using the set of simulation functions
and some known fixed-point techniques. We do not need to have some strong conditions such as completeness or compactness of the metric
space or continuity of the self-mapping in our results. Taking only one geometric condition, we ensure
the existence of a fixed disc of a new type contractive mapping.
\end{abstract}

\author[N. Y. \"{O}ZG\"{U}R]{N\.{I}HAL YILMAZ \"{O}ZG\"{U}R}
\address[Nihal Y{\i}lmaz \"{O}zg\"{u}r]{Bal\i kesir University, Department
of Mathematics, 10145 Bal\i kesir, TURKEY}
\email{nihal@balikesir.edu.tr}
\maketitle

%\thanks{Bal\i kesir University, Department of Mathematics, 10145 Bal\i
%kesir, TURKEY, nihaltas@balikesir.edu.tr, nihal@balikesir.edu.tr}

\setcounter{page}{1}

%\dedicatory{This paper is dedicated to Professor ABCD}

\section{\textbf{Introduction and preliminaries}}

\label{sec:intro}

Let $(X,d)$ be a metric space and $T$ a self-mapping on $X$. If $T$ has more
than one fixed point then the investigation of the geometric properties of
fixed points appears a natural and interesting problem. For example, let $X=%
\mathbb{R}$ be the set of all real numbers with the usual metric $%
d(x,y)=\left\vert x-y\right\vert $ for all $x,y\in \mathbb{R}$. The
self-mapping $T:\mathbb{R\rightarrow R}$ defined by $Tx=x^{2}-2$ has two
fixed points $x_{1}=-1$ and $x_{2}=2$. Fixed points of $T$ form the circle $%
C_{\frac{1}{2},\frac{3}{2}}=\left\{ x\in \mathbb{R}:\left\vert x-\frac{1}{2}%
\right\vert =\frac{3}{2}\right\} $. In recent years, the fixed-circle
problem and the fixed-disc problem have been studied with this perspective
on metric and some generalized metric spaces (see \cite{Aydi, Mlaiki, Mlaiki-Axioms, Ozgur-Tas-malaysian, Ozgur-Tas-circle-thesis, Ozgur-Tas-Celik, ozgur-aip, ozgur-discontinuity, Pant-Ozgur-Tas, Pant simulation, Pant-Nihal, Tas math, Tas, Tas-fbed, Tas-bap,
Tas-Nb, Tas-fixed-tx, Tas-aydi} for more details). As
a consequence of some fixed-circle theorems, fixed-disc results have been
also appeared. For example, the self-mapping $S$ on $\mathbb{R}$ defined by%
\begin{equation*}
Sx=\left\{
\begin{array}{ccc}
x & ; & x\in \left[ 0,2\right] \\
x+\sqrt{2} & ; & \text{otherwise}%
\end{array}%
\right.
\end{equation*}%
fixes all points of the disc $D_{1,1}=\left\{ x\in \mathbb{R}:\left\vert
x-1\right\vert \leq 1\right\} $. Clearly, $S$ fixes all circles contained in
the disc $D_{1,1}$. Therefore it is an attractive problem to study new
fixed-disc results and their consequences on metric spaces.

In this paper, our aim is to present new fixed-disc results. To do this, we
provide a new technique using simulation functions defined in \cite%
{Khojasteh}. The function $\zeta :[0,\infty )^{2}\rightarrow
%TCIMACRO{\U{211d} }%
%BeginExpansion
\mathbb{R}
%EndExpansion
$ is said to be a simulation function, if it satisfies the following
conditions :

$(\zeta _{1})$ $\zeta (0,0)=0,$

$(\zeta _{2})$ $\zeta (t,s)<s-t$ for all $s,t>0$,

$(\zeta _{3})$ If $\{t_{n}\}$, $\{s_{n}\}$ are sequences in $(0,\infty )$
such that%
\begin{equation*}
\underset{n\rightarrow \infty }{\lim }t_{n}=\underset{n\rightarrow \infty }{%
\lim }s_{n}>0\text{,}
\end{equation*}%
then%
\begin{equation*}
\underset{n\rightarrow \infty }{\lim \sup }\zeta (t_{n},s_{n})<0\text{.}
\end{equation*}%
The set of all simulation functions is denoted by $\mathcal{Z}$ \cite%
{Khojasteh}. In \cite{Khojasteh}, the notion of a $\mathcal{Z}$-contraction
was defined to generalize the Banach contraction as follows:

\begin{definition}
\label{z-contraction}\cite{Khojasteh} Let $(X,d)$ be a metric space and $%
T:X\rightarrow X$ a mapping and $\zeta \in \mathcal{Z}$. Then $T$ is called
a $\mathcal{Z}$-contraction with respect to $\zeta $ if the following
condition is satisfied for all $x,y\in X:$
\begin{equation}
\zeta \left( d\left( Tx,Ty\right) ,d\left( x,y\right) \right) \geq 0.\text{ }
\label{definition of z-contraction}
\end{equation}
\end{definition}

Every $\mathcal{Z}$-contraction mapping is contractive and hence it is
continuous (see \cite{Chanda}, \cite{Khojasteh}, \cite{Radenovic 2017} for
basic properties and some examples of a $\mathcal{Z}$-contraction). In \cite%
{Khojasteh}, Khojasteh et al. used the notion of a simulation function to
unify several existing fixed-point results in the literature.

We note that the notion of a simulation function has many interesting
applications (see \cite{Chanda}, \cite{Fe}, \cite{Karapinar} and the
references therein). In a very recent paper, it is given a new solution to
an open problem raised by Rhoades about the discontinuity problem at fixed
point using the family of simulation functions (see \cite{Pant simulation}
and \cite{Rhoades}).

\section{\textbf{Main results}}

\label{sec:1}

Let $(X,d)$ be a metric space, $D_{x_{0},r}=\left\{ x\in
X:d(x,x_{0})\leq r\right\} $ $(r\in
%TCIMACRO{\U{211d} }%
%BeginExpansion
\mathbb{R}
%EndExpansion
^{+}\cup \{0\})$ a disc and $T$ a self-mapping on $X$. If $Tx=x$ for all $%
x\in D_{x_{0},r}$ then the disc $D_{x_{0},r}$ is called as the fixed disc of
$T$ \cite{Tas-aydi}.

From now on we assume that $(X,d)$ is a metric space and $T:X\rightarrow X$
a self-mapping. To obtain new fixed-disc results, we define several new
contractive mappings. At first, we give the following definition.

\begin{definition}
\label{def1} Let $\zeta \in \mathcal{Z}$ be any simulation function. $T$ is
said to be a $\mathcal{Z}_{c}$-contraction with respect to $\zeta $ if there
exists an $x_{0}\in X$ such that the following condition holds for all $x\in
X:$%
\begin{equation*}
d(Tx,x)>0\Rightarrow \zeta \left( d(Tx,x),d(Tx,x_{0})\right) \geq 0.
\end{equation*}
\end{definition}

If $T$ is a $\mathcal{Z}_{c}$-contraction with respect to $\zeta $, then we
have%
\begin{equation}
d(Tx,x)<d(Tx,x_{0}),  \label{eqn1}
\end{equation}%
for all $x\in X$ with $Tx\neq x_{0}$. Indeed, if $Tx=x$ then the inequality (%
\ref{eqn1}) is satisfied trivially. If $Tx\neq x$ then $d(Tx,x)>0$. By the
definition of a $\mathcal{Z}_{c}$-contraction and the condition $(\zeta
_{2}) $, we obtain%
\begin{equation*}
0\leq \zeta \left( d(Tx,x),d(Tx,x_{0})\right) <d(Tx,x_{0})-d(Tx,x)
\end{equation*}%
and so the equation (\ref{eqn1}) is satisfied.

In all of our fixed disc results we use the number $\rho \in
%TCIMACRO{\U{211d} }%
%BeginExpansion
\mathbb{R}
%EndExpansion
^{+}\cup \{0\}$ defined by%
\begin{equation}
\rho =\inf_{x\in X}\{d(x,Tx)\mid Tx\neq x\}\text{.}
\label{definition of radius}
\end{equation}%
We begin with the following theorem.

\begin{theorem}
\label{thm1} If $T$ is a $\mathcal{Z}_{c}$-contraction with respect to $%
\zeta $ with $x_{0}\in X$ and the condition $0<d(Tx,x_{0})\leq \rho $ holds
for all $x\in D_{x_{0},\rho }-\left\{ x_{0}\right\} $ then $D_{x_{0},\rho }$
is a fixed disc of $T$.
\end{theorem}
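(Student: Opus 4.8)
The plan is to show directly that $Tx=x$ for every $x\in D_{x_0,\rho}$, treating the center $x_0$ and the remaining points of the disc separately. The two tools at my disposal are the consequence (\ref{eqn1}) of being a $\mathcal{Z}_{c}$-contraction, valid for every $x$ with $Tx\neq x_{0}$, together with the fact that the infimum in (\ref{definition of radius}) forces $d(x,Tx)\geq \rho$ whenever $Tx\neq x$.

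First I would dispose of the center by contradiction. Suppose $Tx_{0}\neq x_{0}$. Then the hypothesis ``$Tx\neq x_{0}$'' of (\ref{eqn1}), read at $x=x_{0}$, is exactly $Tx_{0}\neq x_{0}$, so (\ref{eqn1}) yields $d(Tx_{0},x_{0})<d(Tx_{0},x_{0})$, which is absurd. Hence $Tx_{0}=x_{0}$, and the center belongs to the fixed-point set without invoking the geometric hypothesis at all.

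Next I would handle an arbitrary $x\in D_{x_{0},\rho}\setminus\{x_{0}\}$, again by contradiction. Assume $Tx\neq x$; then $d(Tx,x)>0$, so by the definition (\ref{definition of radius}) of $\rho$ as an infimum taken over precisely such points, $d(Tx,x)\geq\rho$. On the other hand, the standing hypothesis $0<d(Tx,x_{0})\leq\rho$ guarantees $Tx\neq x_{0}$, so (\ref{eqn1}) applies and gives $d(Tx,x)<d(Tx,x_{0})\leq\rho$. Chaining these two estimates produces $\rho\leq d(Tx,x)<\rho$, a contradiction, and therefore $Tx=x$.

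Combining the two cases shows $Tx=x$ on all of $D_{x_{0},\rho}$, which is exactly the assertion that $D_{x_{0},\rho}$ is a fixed disc of $T$. I expect the only delicate point to be the bookkeeping at the center: the geometric condition $0<d(Tx,x_{0})\leq\rho$ is imposed only for $x\in D_{x_{0},\rho}\setminus\{x_{0}\}$ and carries no information about $x_{0}$ itself, so one must resist using it there and instead extract the fixedness of $x_{0}$ purely from (\ref{eqn1}). Everything else is the short confrontation of the lower bound coming from the infimum with the upper bound coming from the contraction inequality.
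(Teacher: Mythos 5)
Your proof is correct and is essentially the paper's own argument: both rest on confronting the lower bound $d(Tx,x)\geq\rho$ coming from the infimum (\ref{definition of radius}) with the strict upper bound $d(Tx,x)<d(Tx,x_{0})\leq\rho$ coming from $(\zeta_{2})$, i.e., from (\ref{eqn1}). The only difference is organizational: you split on $x=x_{0}$ versus $x\neq x_{0}$ rather than the paper's $\rho=0$ versus $\rho\neq 0$, which in fact treats the center slightly more cleanly, since the paper's displayed chain in the case $\rho\neq 0$ invokes $d(Tx,x_{0})\leq\rho$ and thus tacitly assumes $x\neq x_{0}$ there.
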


\begin{proof}
Let $\rho =0$. In this case we have $D_{x_{0},\rho }=\{x_{0}\}$. If $%
Tx_{0}\neq x_{0}$ then $d(x_{0},Tx_{0})>0$ and using the definition of a $%
\mathcal{Z}_{c}$-contraction we get
\begin{equation*}
\zeta \left( d(Tx_{0},x_{0}),d(Tx_{0},x_{0})\right) \geq 0.
\end{equation*}%
This is a contradiction by the condition $(\zeta _{2})$. Hence it should be $%
Tx_{0}=x_{0}$.

Assume that $\rho \neq 0$. Let $x\in D_{x_{0},\rho }$ be such that $Tx\neq x$%
. By the definition of $\rho $, we have $0<\rho \leq d(x,Tx)$ and using the
condition $(\zeta _{2})$ we find
\begin{eqnarray*}
\zeta \left( d(Tx,x),d(Tx,x_{0})\right) &<&d(Tx,x_{0})-d(Tx,x) \\
&<&\rho -d(Tx,x)\leq \rho -\rho =0,
\end{eqnarray*}%
a contradiction with the $\mathcal{Z}_{c}$-contractive property of $T$. It
should be $Tx=x$ and so, $T$ fixes the disc $D_{x_{0},\rho }$.
\end{proof}

In the following corollaries we obtain new fixed-disc results.

\begin{corollary}
\label{cor1} Let $x_{0}\in X$. If $T$ satisfies the following conditions
then $D_{x_{0},\rho }$ is a fixed disc of $T:$

$1)$ $d(Tx,x)\leq \lambda d(Tx,x_{0})$ for all $x\in X$,\newline
where $\lambda \in \left[ 0,1\right) $.

$2)$ $0<d(Tx,x_{0})\leq \rho $ holds for all $x\in D_{x_{0},\rho }-\left\{
x_{0}\right\} $.
\end{corollary}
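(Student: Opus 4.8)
The plan is to show that Corollary~\ref{cor1} follows from Theorem~\ref{thm1} by verifying that condition~$(1)$ forces $T$ to be a $\mathcal{Z}_{c}$-contraction with respect to a suitable simulation function $\zeta$. Since condition~$(2)$ of the corollary is literally the geometric hypothesis $0<d(Tx,x_{0})\leq \rho$ of the theorem, the only work is to produce a $\zeta\in\mathcal{Z}$ for which the implication $d(Tx,x)>0\Rightarrow \zeta(d(Tx,x),d(Tx,x_{0}))\geq 0$ holds.

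First I would recall the standard example of a simulation function built from a contraction constant: for $\lambda\in[0,1)$ define $\zeta(t,s)=\lambda s-t$ for all $t,s\in[0,\infty)$. I would then check the three defining properties. Property $(\zeta_{1})$ is immediate since $\zeta(0,0)=0$. For $(\zeta_{2})$, for all $s,t>0$ we have $\zeta(t,s)=\lambda s-t\le s-t<s-t$ is not quite right, so more carefully $\lambda s-t<s-t$ holds precisely because $\lambda<1$ gives $\lambda s<s$; hence $(\zeta_{2})$ holds. For $(\zeta_{3})$, if $t_{n},s_{n}$ are sequences in $(0,\infty)$ with a common positive limit $\ell$, then $\limsup_{n\to\infty}\zeta(t_{n},s_{n})=\lambda\ell-\ell=(\lambda-1)\ell<0$, so $(\zeta_{3})$ holds as well. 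Thus $\zeta\in\mathcal{Z}$.

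Next I would translate condition~$(1)$, namely $d(Tx,x)\le \lambda d(Tx,x_{0})$ for all $x\in X$, into the $\mathcal{Z}_{c}$-contractive inequality. Whenever $d(Tx,x)>0$, condition~$(1)$ rearranges to $\lambda d(Tx,x_{0})-d(Tx,x)\ge 0$, which is exactly $\zeta(d(Tx,x),d(Tx,x_{0}))\ge 0$ for the chosen $\zeta$. Hence $T$ is a $\mathcal{Z}_{c}$-contraction with respect to $\zeta$ with the same center $x_{0}$.

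Finally, with condition~$(2)$ supplying the remaining hypothesis of Theorem~\ref{thm1}, I would invoke that theorem directly to conclude that $D_{x_{0},\rho}$ is a fixed disc of $T$. I do not expect any genuine obstacle here: the corollary is a specialization, and the entire content is the verification that the linear expression $\lambda s-t$ is a legitimate simulation function. The only point demanding a little care is the boundary behavior when $\lambda=0$ or when one of $t,s$ is zero, but the implication in Definition~\ref{def1} is only required when $d(Tx,x)>0$, so the degenerate cases cause no difficulty.
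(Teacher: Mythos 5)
Your proof is correct and follows essentially the same route as the paper: both define $\zeta(t,s)=\lambda s-t$, observe that condition $(1)$ makes $T$ a $\mathcal{Z}_{c}$-contraction with respect to this $\zeta$, and then invoke Theorem~\ref{thm1}. The only difference is that you verify explicitly that $\zeta\in\mathcal{Z}$, whereas the paper simply cites Corollary 2.10 of Khojasteh et al.; your verification is accurate, so this is just added detail, not a different argument.
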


\begin{proof}
Let us consider the function $\zeta _{1}:[0,\infty )\times \lbrack 0,\infty
)\rightarrow
%TCIMACRO{\U{211d} }%
%BeginExpansion
\mathbb{R}
%EndExpansion
$ defined by%
\begin{equation*}
\zeta _{1}(t,s)=\lambda s-t\text{ for all }s,t\in \lbrack 0,\infty )
\end{equation*}%
(see Corollary 2.10 given in \cite{Khojasteh}). Using the hypothesis, it is
easy to see that the self-mapping $T$ is a $\mathcal{Z}_{c}$-contraction
with respect to $\zeta _{1}$ with $x_{0}\in X$. Hence the proof follows by
setting $\zeta =\zeta _{1}$ in Theorem \ref{thm1}.
\end{proof}

\begin{corollary}
\label{cor2} Let $x_{0}\in X$. If $T$ satisfies the following conditions
then $D_{x_{0},\rho }$ is a fixed disc of $T:$

$1)$ $d(Tx,x)\leq d(Tx,x_{0})-\varphi \left( d(Tx,x_{0})\right) $ for all $%
x\in X,$\newline
where $\varphi :[0,\infty )\rightarrow \lbrack 0,\infty )$ is lower semi
continuous function and $\varphi ^{-1}(0)=0$.

$2)$ $0<d(Tx,x_{0})\leq \rho $ holds for all $x\in D_{x_{0},\rho }-\left\{
x_{0}\right\} .$
\end{corollary}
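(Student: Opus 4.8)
The plan is to mimic the strategy used in Corollary~\ref{cor1}: exhibit an explicit simulation function $\zeta$ for which hypothesis $1)$ forces $T$ to be a $\mathcal{Z}_c$-contraction, and then invoke Theorem~\ref{thm1} with hypothesis $2)$ supplying the geometric condition $0<d(Tx,x_0)\leq\rho$. The natural candidate is
\begin{equation*}
\zeta_2(t,s)=s-\varphi(s)-t\quad\text{for all }s,t\in[0,\infty),
\end{equation*}
since then hypothesis $1)$, namely $d(Tx,x)\leq d(Tx,x_0)-\varphi(d(Tx,x_0))$, says precisely that $\zeta_2(d(Tx,x),d(Tx,x_0))\geq 0$.

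\medskip

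The key step, and the only nontrivial one, is to verify that $\zeta_2\in\mathcal{Z}$, i.e.\ that $\zeta_2$ satisfies the three defining conditions $(\zeta_1)$--$(\zeta_3)$. Condition $(\zeta_1)$ is immediate: $\zeta_2(0,0)=0-\varphi(0)-0=0$ because $\varphi^{-1}(0)=0$ gives $\varphi(0)=0$. Condition $(\zeta_2)$ requires $\zeta_2(t,s)<s-t$ for all $s,t>0$; since $\zeta_2(t,s)=(s-t)-\varphi(s)$, this reduces to $\varphi(s)>0$ for $s>0$, which again follows from $\varphi^{-1}(0)=0$ together with the fact that $\varphi$ maps into $[0,\infty)$. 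The main obstacle is condition $(\zeta_3)$: given sequences $\{t_n\},\{s_n\}\subset(0,\infty)$ with a common positive limit $\ell=\lim t_n=\lim s_n>0$, I must show $\limsup_{n\to\infty}\zeta_2(t_n,s_n)<0$.

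\medskip

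To handle $(\zeta_3)$ I would write
\begin{equation*}
\zeta_2(t_n,s_n)=(s_n-t_n)-\varphi(s_n).
\end{equation*}
Since $s_n-t_n\to 0$, the limsup of the first part is $0$. The delicate point is that $\varphi$ is only assumed lower semicontinuous, not continuous, so I cannot simply evaluate $\lim\varphi(s_n)=\varphi(\ell)$. Instead I would use lower semicontinuity in the form $\liminf_{n\to\infty}\varphi(s_n)\geq\varphi(\ell)$, which yields $\limsup_{n\to\infty}\bigl(-\varphi(s_n)\bigr)\leq-\varphi(\ell)$. Combining, $\limsup_{n\to\infty}\zeta_2(t_n,s_n)\leq 0-\varphi(\ell)=-\varphi(\ell)<0$, the strict negativity coming from $\ell>0$ and $\varphi^{-1}(0)=0$. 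This confirms $(\zeta_3)$ and hence $\zeta_2\in\mathcal{Z}$.

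\medskip

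With $\zeta_2$ established as a simulation function, hypothesis $1)$ shows $T$ is a $\mathcal{Z}_c$-contraction with respect to $\zeta_2$ with the given $x_0$, and hypothesis $2)$ is exactly the remaining assumption of Theorem~\ref{thm1}. Setting $\zeta=\zeta_2$ in Theorem~\ref{thm1} then gives that $D_{x_0,\rho}$ is a fixed disc of $T$, completing the proof. I expect the only real care to be needed in the lower-semicontinuity argument for $(\zeta_3)$; everything else is a direct substitution paralleling Corollary~\ref{cor1}.
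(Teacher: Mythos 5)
Your proposal is correct and follows essentially the same route as the paper: the same simulation function $\zeta_2(t,s)=s-\varphi(s)-t$ together with an application of Theorem~\ref{thm1}, using hypothesis $2)$ as the geometric condition. The only difference is that the paper simply cites Corollary~2.11 of \cite{Khojasteh} for the fact that $\zeta_2\in\mathcal{Z}$, whereas you verify $(\zeta_1)$--$(\zeta_3)$ directly, and your lower-semicontinuity argument for $(\zeta_3)$ (via $\liminf_{n\to\infty}\varphi(s_n)\geq\varphi(\ell)>0$) is sound.
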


\begin{proof}
Consider the function $\zeta _{2}:[0,\infty )\times \lbrack 0,\infty
)\rightarrow
%TCIMACRO{\U{211d} }%
%BeginExpansion
\mathbb{R}
%EndExpansion
$ defined by%
\begin{equation*}
\zeta _{2}(t,s)=s-\varphi \left( s\right) -t\text{,}
\end{equation*}%
for all $s,t\in \lbrack 0,\infty )$ (see Corollary 2.11 given in \cite%
{Khojasteh}). Using the hypothesis, it is easy to verify that the
self-mapping $T$ is a $\mathcal{Z}_{c}$-contraction with respect to $\zeta
_{2}$ with $x_{0}\in X$. Hence the proof follows by setting $\zeta =\zeta
_{2}$ in Theorem \ref{thm1}.
\end{proof}

\begin{corollary}
\label{cor3} Let $x_{0}\in X$. If $T$ satisfies the following conditions
then $D_{x_{0},\rho }$ is a fixed disc of $T:$

$1)$ $d(Tx,x)\leq \varphi \left( d(Tx,x_{0})\right) d(Tx,x_{0})$ for all $%
x\in X,$\newline
where $\varphi :[0,\infty )\rightarrow \lbrack 0,1)$ be a mapping such that $%
\underset{t\rightarrow r^{+}}{\lim \sup }\varphi (t)<1$, for all $r>0$.

$2)$ $0<d(Tx,x_{0})\leq \rho $ holds for all $x\in D_{x_{0},\rho }-\left\{
x_{0}\right\} .$
\end{corollary}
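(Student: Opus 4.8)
The plan is to follow verbatim the strategy used in Corollaries \ref{cor1} and \ref{cor2}: build a simulation function tailored to the contractive hypothesis and then invoke Theorem \ref{thm1}. The natural candidate here is the Geraghty-type function
\begin{equation*}
\zeta_{3}(t,s)=\varphi(s)\,s-t\text{ for all }s,t\in[0,\infty),
\end{equation*}
which is the simulation function associated with a Geraghty contraction (this is the next corollary in the sequence cited from \cite{Khojasteh}). I would first record that $\zeta_{3}\in\mathcal{Z}$, and this verification is really the only substantive point of the proof.

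Checking the three axioms: $(\zeta_{1})$ is immediate since $\zeta_{3}(0,0)=\varphi(0)\cdot 0-0=0$; and $(\zeta_{2})$ holds because for $s,t>0$ the bound $\varphi(s)<1$ gives $\zeta_{3}(t,s)=\varphi(s)s-t<s-t$. The delicate condition is $(\zeta_{3})$. Given sequences $\{t_{n}\},\{s_{n}\}$ in $(0,\infty)$ with a common limit $L>0$, one estimates
\begin{equation*}
\underset{n\rightarrow\infty}{\lim\sup}\,\zeta_{3}(t_{n},s_{n})=\underset{n\rightarrow\infty}{\lim\sup}\left(\varphi(s_{n})s_{n}-t_{n}\right)\leq L\left(\underset{n\rightarrow\infty}{\lim\sup}\,\varphi(s_{n})\right)-L,
\end{equation*}
using that $t_{n}\to L$ and $s_{n}\to L$. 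Here the hypothesis $\underset{t\rightarrow r^{+}}{\lim\sup}\varphi(t)<1$, applied with $r=L$, is what forces $\underset{n\rightarrow\infty}{\lim\sup}\,\varphi(s_{n})<1$, so the right-hand side is strictly negative. This is exactly the classical argument that a Geraghty function induces a simulation function, and it is the step I expect to require the most care; everything else is mechanical.

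Next I would show that $T$ is a $\mathcal{Z}_{c}$-contraction with respect to $\zeta_{3}$ with the given $x_{0}$. Fixing $x\in X$ with $d(Tx,x)>0$, hypothesis $1)$ reads $d(Tx,x)\leq\varphi(d(Tx,x_{0}))\,d(Tx,x_{0})$, which rearranges precisely into
\begin{equation*}
\zeta_{3}\left(d(Tx,x),d(Tx,x_{0})\right)=\varphi(d(Tx,x_{0}))\,d(Tx,x_{0})-d(Tx,x)\geq 0,
\end{equation*}
so the defining implication of Definition \ref{def1} is satisfied. Finally, since hypothesis $2)$ supplies exactly the geometric condition $0<d(Tx,x_{0})\leq\rho$ for all $x\in D_{x_{0},\rho}-\{x_{0}\}$, I would conclude by setting $\zeta=\zeta_{3}$ in Theorem \ref{thm1}, which yields that $D_{x_{0},\rho}$ is a fixed disc of $T$. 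The main obstacle is thus confined to confirming $(\zeta_{3})$ for $\zeta_{3}$; the rest of the proof is a direct transcription of the hypotheses into the $\mathcal{Z}_{c}$-contraction format.
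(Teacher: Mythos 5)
Your overall route is the same as the paper's: the paper also takes $\zeta_{3}(t,s)=s\varphi(s)-t$, asserts that $T$ is a $\mathcal{Z}_{c}$-contraction with respect to $\zeta_{3}$ (citing Corollary 2.13 of \cite{Khojasteh} for membership in $\mathcal{Z}$ instead of verifying it), and concludes by Theorem \ref{thm1}; your translation of hypothesis $1)$ into $\zeta_{3}\left(d(Tx,x),d(Tx,x_{0})\right)\geq 0$ and the final appeal to Theorem \ref{thm1} match the paper exactly. The only place where you go beyond the paper is the explicit verification that $\zeta_{3}$ is a simulation function, and that is precisely where your argument has a genuine gap.

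The problem is the step where you deduce $\limsup_{n\rightarrow\infty}\varphi(s_{n})<1$ from the hypothesis $\limsup_{t\rightarrow L^{+}}\varphi(t)<1$: that hypothesis is one-sided, constraining $\varphi$ only along sequences approaching $L$ from the right, while your $s_{n}$ may approach $L$ from below. Concretely, define $\varphi(t)=t$ for $t\in[0,1)$ and $\varphi(t)=0$ for $t\geq 1$; then $\varphi$ maps $[0,\infty)$ into $[0,1)$ and $\limsup_{t\rightarrow r^{+}}\varphi(t)<1$ for every $r>0$, yet taking $s_{n}=1-\frac{1}{n+1}$ and $t_{n}=s_{n}^{2}$ gives $t_{n},s_{n}\rightarrow 1>0$ and $\zeta_{3}(t_{n},s_{n})=s_{n}\varphi(s_{n})-t_{n}=0$ for all $n$, so axiom $(\zeta_{3})$ fails and this $\zeta_{3}$ is \emph{not} in $\mathcal{Z}$. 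Hence the membership claim cannot be proved as you state it (the same latent defect sits inside the paper's citation of Corollary 2.13 of \cite{Khojasteh}). Fortunately the corollary itself does not need it: the proof of Theorem \ref{thm1} uses only property $(\zeta_{2})$ and never invokes axiom $(\zeta_{3})$, and your function does satisfy $(\zeta_{2})$ since $\varphi(s)<1$ for all $s$. So you can close the gap either by observing that Theorem \ref{thm1} and its proof remain valid verbatim for any $\zeta$ satisfying $(\zeta_{1})$ and $(\zeta_{2})$ alone, or by arguing directly: if $x\in D_{x_{0},\rho}$ and $Tx\neq x$, then hypotheses $1)$ and $2)$ give $d(Tx,x)\leq\varphi\left(d(Tx,x_{0})\right)d(Tx,x_{0})<d(Tx,x_{0})\leq\rho\leq d(Tx,x)$, a contradiction (the case $\rho=0$ is handled the same way at $x_{0}$). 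As written, however, the step you yourself single out as the crux is false in general.
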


\begin{proof}
Consider the function $\zeta _{3}:[0,\infty )\times \lbrack 0,\infty
)\rightarrow
%TCIMACRO{\U{211d} }%
%BeginExpansion
\mathbb{R}
%EndExpansion
$ defined by%
\begin{equation*}
\zeta _{3}(t,s)=s\varphi \left( s\right) -t\text{,}
\end{equation*}%
for all $s,t\in \lbrack 0,\infty )$ (see Corollary 2.13 given in \cite%
{Khojasteh}). Using the hypothesis, it is easy to verify that the
self-mapping $T$ is a $\mathcal{Z}_{c}$-contraction with respect to $\zeta
_{3}$ with $x_{0}\in X$. Therefore the proof follows by setting $\zeta
=\zeta _{3}$ in Theorem \ref{thm1}.
\end{proof}

\begin{corollary}
\label{cor4} Let $x_{0}\in X$. If $T$ satisfies the following conditions
then $D_{x_{0},\rho }$ is a fixed disc of $T:$

$1)$ $d(Tx,x)\leq \eta \left( d(Tx,x_{0})\right) $ for all $x\in X,$ \newline
where $\eta :[0,\infty )\rightarrow \lbrack 0,\infty )$ be an upper semi
continuous mapping such that $\eta (t)<t$ for all $t>0$.

$2)$ $0<d(Tx,x_{0})\leq \rho $ holds for all $x\in D_{x_{0},\rho }-\left\{
x_{0}\right\} .$
\end{corollary}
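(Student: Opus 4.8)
The plan is to follow exactly the template used for Corollaries \ref{cor1}--\ref{cor3}: exhibit a simulation function $\zeta _{4}$ that turns hypothesis $1)$ into the defining inequality of a $\mathcal{Z}_{c}$-contraction, and then invoke Theorem \ref{thm1} with condition $2)$. Concretely, I would set
\begin{equation*}
\zeta _{4}(t,s)=\eta \left( s\right) -t\text{,}
\end{equation*}
for all $s,t\in \lbrack 0,\infty )$, which is the Boyd--Wong type simulation function (cf. Corollary 2.12 in \cite{Khojasteh}).

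First I would confirm that $\zeta _{4}\in \mathcal{Z}$. Conditions $(\zeta _{1})$ and $(\zeta _{2})$ are immediate from the properties of $\eta $: one has $\zeta _{4}(0,0)=\eta (0)=0$, and since $\eta (s)<s$ for every $s>0$ we get $\zeta _{4}(t,s)=\eta (s)-t<s-t$ for all $s,t>0$. The only step that genuinely uses the continuity hypothesis on $\eta $ is $(\zeta _{3})$: if $t_{n},s_{n}\in (0,\infty )$ satisfy $\lim t_{n}=\lim s_{n}=\ell >0$, then the upper semi-continuity of $\eta $ yields $\limsup_{n}\eta (s_{n})\leq \eta (\ell )$, while $\eta (\ell )<\ell $ because $\ell >0$; hence
\begin{equation*}
\underset{n\rightarrow \infty }{\lim \sup }\,\zeta _{4}(t_{n},s_{n})=\underset{n\rightarrow \infty }{\lim \sup }\left( \eta (s_{n})-t_{n}\right) \leq \eta (\ell )-\ell <0.
\end{equation*}

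Next I would translate hypothesis $1)$ into the $\mathcal{Z}_{c}$-contractive condition. For any $x\in X$ with $d(Tx,x)>0$, condition $1)$ gives $d(Tx,x)\leq \eta \left( d(Tx,x_{0})\right) $, which is precisely
\begin{equation*}
\zeta _{4}\left( d(Tx,x),d(Tx,x_{0})\right) =\eta \left( d(Tx,x_{0})\right) -d(Tx,x)\geq 0.
\end{equation*}
Thus $T$ is a $\mathcal{Z}_{c}$-contraction with respect to $\zeta _{4}$ with $x_{0}\in X$, and setting $\zeta =\zeta _{4}$ in Theorem \ref{thm1}, together with hypothesis $2)$, shows that $D_{x_{0},\rho }$ is a fixed disc of $T$.

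The argument is almost entirely a matter of matching hypothesis $1)$ to Definition \ref{def1}; the one place that carries real content is the verification of $(\zeta _{3})$, where the upper semi-continuity of $\eta $ is exactly what forces $\limsup_{n}\eta (s_{n})\leq \eta (\ell )$ and hence the strict negativity of the $\lim \sup $. Everything downstream of that is the same direct substitution-and-apply-Theorem-\ref{thm1} scheme as in the preceding corollaries, so I do not anticipate a separate obstacle.
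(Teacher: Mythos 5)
Your proof is correct and follows exactly the paper's route: the same simulation function $\zeta _{4}(t,s)=\eta (s)-t$ (this is Corollary 2.14, not 2.12, of \cite{Khojasteh}), the same translation of hypothesis $1)$ into the $\mathcal{Z}_{c}$-contraction condition, and the same appeal to Theorem \ref{thm1}; you merely write out the verification that the paper dismisses as ``easy to verify.'' One small caveat, which your write-up shares with the paper's own statement: your check of $(\zeta _{1})$ asserts $\eta (0)=0$, which does not follow from the stated hypotheses (upper semicontinuity together with $\eta (t)<t$ for $t>0$ still permits $\eta (0)>0$), though this is harmless for the conclusion because the proof of Theorem \ref{thm1} uses only the condition $(\zeta _{2})$.
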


\begin{proof}
Consider the function $\zeta _{4}:[0,\infty )\times \lbrack 0,\infty
)\rightarrow
%TCIMACRO{\U{211d} }%
%BeginExpansion
\mathbb{R}
%EndExpansion
$ defined by%
\begin{equation*}
\zeta _{4}(t,s)=\eta \left( s\right) -t\text{,}
\end{equation*}%
for all $s,t\in \lbrack 0,\infty )$ (see Corollary 2.14 given in \cite%
{Khojasteh}). Using the hypothesis, it is easy to verify that the
self-mapping $T$ is a $\mathcal{Z}_{c}$-contraction with respect to $\zeta
_{4}$ with $x_{0}\in X$. Therefore the proof follows by setting $\zeta
=\zeta _{4}$ in Theorem \ref{thm1}.
\end{proof}

\begin{corollary}
\label{cor5} Let $x_{0}\in X$. If $T$ satisfies the following conditions
then $D_{x_{0},\rho }$ is a fixed disc of $T:$

$1)$ $\int\limits_{0}^{d(Tx,x)}\phi (t)dt\leq d(Tx,x_{0})$ for all $x\in X,$%
\newline
where $\phi :[0,\infty )\rightarrow \lbrack 0,\infty )$ is a function such
that $\int\limits_{0}^{\varepsilon }\phi (t)dt$ exists and $%
\int\limits_{0}^{\varepsilon }\phi (t)dt>\varepsilon $, for each $%
\varepsilon >0$.

$2)$ $0<d(Tx,x_{0})\leq \rho $ holds for all $x\in D_{x_{0},\rho }-\left\{
x_{0}\right\} .$
\end{corollary}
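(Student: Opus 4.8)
The plan is to follow the same template used in Corollaries \ref{cor1} through \ref{cor4}: exhibit an explicit simulation function $\zeta_5 \in \mathcal{Z}$ so that the integral hypothesis $(1)$ becomes exactly the $\mathcal{Z}_c$-contraction inequality, and then invoke Theorem \ref{thm1}. The natural candidate is
\begin{equation*}
\zeta_5(t,s) = s - \int\limits_0^t \phi(u)\,du,
\end{equation*}
for all $s,t \in [0,\infty)$. With this choice, the condition $\zeta_5\big(d(Tx,x),d(Tx,x_0)\big) \geq 0$ reads precisely $\int_0^{d(Tx,x)}\phi(u)\,du \leq d(Tx,x_0)$, which is hypothesis $(1)$. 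So the entire task reduces to verifying that $\zeta_5$ satisfies the three defining axioms $(\zeta_1)$, $(\zeta_2)$, $(\zeta_3)$ of a simulation function.

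First I would check $(\zeta_1)$: since $\int_0^0 \phi(u)\,du = 0$, we get $\zeta_5(0,0) = 0$ immediately. Next, for $(\zeta_2)$, I need $\zeta_5(t,s) < s - t$ for all $s,t > 0$; since $\zeta_5(t,s) - (s-t) = t - \int_0^t \phi(u)\,du$, this requires $\int_0^t \phi(u)\,du > t$ for every $t > 0$, which is exactly the standing assumption on $\phi$ (with $\varepsilon = t$). Thus $(\zeta_2)$ holds. For $(\zeta_3)$, suppose $t_n, s_n \in (0,\infty)$ with $\lim t_n = \lim s_n = \ell > 0$. Then
\begin{equation*}
\limsup_{n\to\infty} \zeta_5(t_n,s_n) = \limsup_{n\to\infty}\Big(s_n - \int\limits_0^{t_n}\phi(u)\,du\Big) = \ell - \int\limits_0^{\ell}\phi(u)\,du < 0,
\end{equation*}
where the middle equality uses continuity of the integral $t \mapsto \int_0^t \phi(u)\,du$ (which exists by hypothesis) together with $\lim t_n = \lim s_n = \ell$, and the final strict inequality again invokes $\int_0^\ell \phi(u)\,du > \ell$. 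This confirms $(\zeta_3)$, so $\zeta_5 \in \mathcal{Z}$.

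Having established that $\zeta_5$ is a genuine simulation function, the implication $d(Tx,x)>0 \Rightarrow \zeta_5\big(d(Tx,x),d(Tx,x_0)\big)\geq 0$ is precisely hypothesis $(1)$ rewritten, so $T$ is a $\mathcal{Z}_c$-contraction with respect to $\zeta_5$ with the point $x_0 \in X$. Together with hypothesis $(2)$, namely $0 < d(Tx,x_0) \leq \rho$ for all $x \in D_{x_0,\rho}-\{x_0\}$, all the requirements of Theorem \ref{thm1} are met, and setting $\zeta = \zeta_5$ there yields that $D_{x_0,\rho}$ is a fixed disc of $T$. The only point requiring genuine care — and the step I would flag as the main obstacle — is the continuity argument in $(\zeta_3)$: one must ensure that the hypothesis ``$\int_0^\varepsilon \phi(u)\,du$ exists for each $\varepsilon>0$'' is strong enough to pass the limit inside the integral bound, i.e. that $t \mapsto \int_0^t \phi(u)\,du$ is continuous so that $\int_0^{t_n}\phi(u)\,du \to \int_0^\ell \phi(u)\,du$. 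This follows from absolute continuity of the indefinite integral whenever $\phi$ is (locally) integrable, which is the intended reading of the existence hypothesis.
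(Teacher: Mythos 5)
Your proposal is correct and takes essentially the same route as the paper: the paper defines the identical simulation function $\zeta_{5}(t,s)=s-\int_{0}^{t}\phi(u)\,du$, observes that hypothesis $(1)$ is exactly the $\mathcal{Z}_{c}$-contraction condition with respect to $\zeta_{5}$, and invokes Theorem \ref{thm1}. The only difference is that the paper cites Corollary 2.15 of Khojasteh et al.\ for the fact that $\zeta_{5}\in\mathcal{Z}$, whereas you verify the axioms $(\zeta_{1})$--$(\zeta_{3})$ directly (your continuity argument for $(\zeta_{3})$, using local integrability of $\phi$, being the right way to close that step).
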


\begin{proof}
Consider the function $\zeta _{5}:[0,\infty )\times \lbrack 0,\infty
)\rightarrow
%TCIMACRO{\U{211d} }%
%BeginExpansion
\mathbb{R}
%EndExpansion
$ defined by%
\begin{equation*}
\zeta _{5}(t,s)=s-\int\limits_{0}^{t}\phi (u)du\text{,}
\end{equation*}%
for all $s,t\in \lbrack 0,\infty )$ (see Corollary 2.15 given in \cite%
{Khojasteh}). Using the hypothesis, it is easy to verify that the
self-mapping $T$ is a $\mathcal{Z}_{c}$-contraction with respect to $\zeta
_{5}$ with $x_{0}\in X$. Therefore the proof follows by taking $\zeta =\zeta
_{4}$ in Theorem \ref{thm1}.
\end{proof}

We give the following example.

\begin{example}
\label{exm1} Let $X=%
%TCIMACRO{\U{211d} }%
%BeginExpansion
\mathbb{R}
%EndExpansion
$ and $\left( X,d\right) $ be the usual metric space with $d(x,y)=\left\vert
x-y\right\vert $. Let us define the self-mapping $T_{1}:X\rightarrow X$ as%
\begin{equation*}
T_{1}x=\left\{
\begin{array}{ccc}
x & ; & x\in \left[ -1,1\right] \\
2x & ; & x\in \left( -\infty ,-1\right) \cup \left( 1,\infty \right)%
\end{array}%
\right. \text{,}
\end{equation*}%
for all $x\in
%TCIMACRO{\U{211d} }%
%BeginExpansion
\mathbb{R}
%EndExpansion
$. Then $T_{1}$ is a $\mathcal{Z}_{c}$-contraction with $\rho =1$, $x_{0}=0$
and the function $\zeta _{6}:[0,\infty )^{2}\rightarrow
%TCIMACRO{\U{211d} }%
%BeginExpansion
\mathbb{R}
%EndExpansion
$ defined as $\zeta _{6}(t,s)=\frac{3}{4}s-t$. Indeed, it is clear that
\begin{equation*}
0<d(T_{1}x,0)=\left\vert x-0\right\vert =\left\vert x\right\vert \leq 1\text{,}
\end{equation*}%
for all $x\in D_{0,1}-\left\{ 0\right\} $ and we have
\begin{equation*}
\zeta _{6}\left( d(T_{1}x,x),d(T_{1}x,x_{0})\right) =\zeta \left( \left\vert
x\right\vert ,\left\vert 2x\right\vert \right) =\frac{1}{2}\left\vert
x\right\vert >0
\end{equation*}%
for all $x\in \mathbb{R}$ such that $d(Tx,x)>0$. Consequently, $T_{1}$ fixes
the disc $D_{0,1}=[-1,1]$.

Now we consider the self-mapping $T_{2}:X\rightarrow X$ defined by%
\begin{equation*}
T_{2}x=\left\{
\begin{array}{ccc}
x & ; & \left\vert x-x_{0}\right\vert \leq \mu \\
2x_{0} & ; & \left\vert x-x_{0}\right\vert >\mu%
\end{array}%
\right. \text{,}
\end{equation*}%
for all $x\in \mathbb{R}$ with $0<x_{0}$ and $\mu \geq 2x_{0}$. The
self-mapping $T_{2}$ is not a $\mathcal{Z}_{c}$-contraction with respect to
any $\zeta \in \mathcal{Z}$ with $x_{0}\in X$. But $T_{2}$ fixes the disc $%
D_{x_{0},\mu }$. Indeed, by the condition $(\zeta _{2})$, for all $x\in
\left( -\infty ,x_{0}-\mu \right) \cup \left( x_{0}+\mu ,\infty \right) $ we
have
\begin{eqnarray*}
\zeta \left( d(Tx,x),d(Tx,x_{0})\right) &=&\zeta \left( \left\vert
2x_{0}-x\right\vert ,\left\vert 2x_{0}-x_{0}\right\vert \right) \\
&=&\zeta \left( \left\vert 2x_{0}-x\right\vert ,\left\vert x_{0}\right\vert
\right) <\left\vert x_{0}\right\vert -\left\vert 2x_{0}-x\right\vert <0\text{%
.}
\end{eqnarray*}%
This example shows that the converse statement of Theorem \ref{thm1} is not
true everywhen.
\end{example}

\begin{remark}
$1)$ We note that the radius $\rho $ of the fixed disc $D_{x_{0},\rho }$ is
not maximal in Theorem \ref{thm1} $($resp. Corollary \ref{cor1}-Corollary %
\ref{cor5}$)$. That is, if $D_{x_{0},\rho _{1}}$ is another fixed disc of
the self-mapping $T$ then it can be $\rho \leq \rho _{1}$. Indeed, if we
consider the self mapping $T_{3}:%
%TCIMACRO{\U{211d} }%
%BeginExpansion
\mathbb{R}
%EndExpansion
\rightarrow
%TCIMACRO{\U{211d} }%
%BeginExpansion
\mathbb{R}
%EndExpansion
$ defined by%
\begin{equation*}
T_{3}x=\left\{
\begin{array}{ccc}
x & ; & x\in \left[ -3,3\right] \\
x+1 & ; & \text{otherwise}%
\end{array}%
\right.
\end{equation*}%
with the usual metric on $%
%TCIMACRO{\U{211d} }%
%BeginExpansion
\mathbb{R}
%EndExpansion
$, then the self-mapping $T_{3}$ is a $\mathcal{Z}_{c}$-contraction with $%
\rho =1$, $x_{0}=0$ and the function $\zeta _{7}:[0,\infty )^{2}\rightarrow
%TCIMACRO{\U{211d} }%
%BeginExpansion
\mathbb{R}
%EndExpansion
$ defined as $\zeta _{7}(t,s)=\frac{1}{2}s-t$. Hence, $T_{1}$ fixes the disc
$D_{0,1}=[-1,1]$ by Theorem \ref{thm1}. But the disc $D_{0,2}=[-2,2]$ is
another fixed disc of the self-mapping $T_{3}$.

$2)$ The radius $\rho $ of the fixed disc $D_{x_{0},\rho }$ is independent
from the center $x_{0}$ in Theorem \ref{thm1} $($resp. Corollary \ref{cor1}%
-Corollary \ref{cor5}$)$. Again, if we consider the self-mapping $T_{3}$
defined in $(1)$, it is easy to verify that $T_{3}$ is also a $\mathcal{Z}%
_{c}$-contraction with $\rho =1$, $x_{0}=1$ and the function $\zeta _{7}$.
Clearly, the disc $D_{1,1}=[0,2]$ is another fixed disc of $T_{3}$.
\end{remark}

In \cite{Aydi}, Aydi et al. introduced the notion of a $\alpha $-$x_{0}$%
-admissible map as follows:

\begin{definition}
\label{def2}\cite{Aydi} Let $X$ be a non-empty set. Given a function $\alpha
:X\times X\rightarrow (0,\infty )$ and $x_{0}\in X.$ $T$ is said to be an $%
\alpha $-$x_{0}$-admissible map if for every $x\in X,$
\begin{equation*}
\alpha (x_{0},x)\geq 1\Rightarrow \alpha (x_{0},Tx)\geq 1.
\end{equation*}
\end{definition}

Then using this notion it was given new fixed-disc results on a rectangular
metric space in \cite{Aydi}. Now we give the following definition.

\begin{definition}
\label{def3} Let $T$ be a self-mapping defined on a metric space $(X,d)$. If
there exist $\zeta \in \mathcal{Z}$, $x_{0}\in X$ and $\alpha :X\times
X\rightarrow (0,\infty )$ such that
\begin{equation*}
d(Tx,x)>0\Rightarrow \zeta \left( \alpha
(x_{0},Tx)d(x,Tx),d(Tx,x_{0})\right) \geq 0\text{ for all }x\in X,
\end{equation*}%
then $T$ is called as an $\alpha $-$\mathcal{Z}_{c}$-contraction with
respect to $\zeta $.
\end{definition}

\begin{remark}
$1)$ If $T$ is an $\alpha $-$\mathcal{Z}_{c}$-contraction with respect to $%
\zeta $, then we have%
\begin{equation}
\alpha (x_{0},Tx)d(x,Tx)<d(Tx,x_{0})\text{,}  \label{eqn2}
\end{equation}%
for all $x\in X$ such that $Tx\neq x_{0}$. If $Tx\neq x_{0}$ then we have $%
d(Tx,x_{0})>0$.

Case 1. If $Tx=x$, then $\alpha (x_{0},Tx)d(x,Tx)=0<d(Tx,x_{0})$.

Case 2. If $Tx\neq x$, then $d(Tx,x)>0$. Since $\alpha (x_{0},Tx)>0$, then
by the condition $(\zeta _{2})$ and the definition of an $\alpha $-$\mathcal{%
Z}_{c}$-contraction, we find
\begin{equation*}
0\leq \zeta \left( \alpha (x_{0},Tx)d(x,Tx),d(Tx,x_{0})\right)
<d(Tx,x_{0})-\alpha (x_{0},Tx)d(x,Tx)
\end{equation*}%
and hence%
\begin{equation*}
\alpha (x_{0},Tx)d(x,Tx)<d(Tx,x_{0}).
\end{equation*}

$2)$ If $\alpha (x_{0},Tx)=1$ then an $\alpha $-$\mathcal{Z}_{c}$%
-contraction $T$ turns into a $\mathcal{Z}_{c}$-contraction with respect to $%
\zeta $ and the equation $($\ref{eqn2}$)$ turns in to the equation $($\ref%
{eqn1}$)$.
\end{remark}

Now we give the following theorem.

\begin{theorem}
\label{thm2} Let $T$ be an $\alpha $-$\mathcal{Z}_{c}$-contraction with
respect to $\zeta $ with $x_{0}\in X$. Assume that $T$ is $\alpha $-$x_{0}$%
-admissible. If $\alpha (x_{0},x)\geq 1$ for $x\in D_{x_{0},\rho }$ and $%
0<d(Tx,x_{0})\leq \rho $ for $x\in D_{x_{0},\rho }-\left\{ x_{0}\right\} $,
then $D_{x_{0},\rho }$ is a fixed disc of $T$.
\end{theorem}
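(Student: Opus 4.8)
The plan is to follow the same structure as the proof of Theorem~\ref{thm1}, treating the degenerate case $\rho=0$ and the generic case $\rho\neq 0$ separately, but now carrying the extra admissibility machinery through the estimate. First I would dispose of the case $\rho=0$, where $D_{x_{0},\rho}=\{x_{0}\}$. Suppose for contradiction that $Tx_{0}\neq x_{0}$, so that $d(Tx_{0},x_{0})>0$. The admissibility hypothesis gives $\alpha(x_{0},x_{0})\geq 1$ (since $x_{0}\in D_{x_{0},\rho}$), and $\alpha$-$x_{0}$-admissibility then yields $\alpha(x_{0},Tx_{0})\geq 1$. Applying the $\alpha$-$\mathcal{Z}_{c}$-contractive property together with $(\zeta_{2})$, I would obtain
\begin{equation*}
0\leq \zeta\left(\alpha(x_{0},Tx_{0})d(x_{0},Tx_{0}),d(Tx_{0},x_{0})\right)<d(Tx_{0},x_{0})-\alpha(x_{0},Tx_{0})d(x_{0},Tx_{0}),
\end{equation*}
and since $\alpha(x_{0},Tx_{0})\geq 1$ forces $\alpha(x_{0},Tx_{0})d(x_{0},Tx_{0})\geq d(x_{0},Tx_{0})=d(Tx_{0},x_{0})$, the right-hand side is $\leq 0$, a contradiction. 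Hence $Tx_{0}=x_{0}$.

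For the case $\rho\neq 0$, I would fix an arbitrary $x\in D_{x_{0},\rho}$ and assume $Tx\neq x$ toward a contradiction, so $d(Tx,x)>0$ and therefore $d(Tx,x)\geq\rho$ by the definition~(\ref{definition of radius}) of $\rho$. The admissibility chain is the key bookkeeping step: since $x\in D_{x_{0},\rho}$ the hypothesis gives $\alpha(x_{0},x)\geq 1$, and $\alpha$-$x_{0}$-admissibility promotes this to $\alpha(x_{0},Tx)\geq 1$. This lets me bound below, using $\alpha(x_{0},Tx)\geq 1$ and $d(Tx,x)\geq\rho$ together with the second hypothesis $d(Tx,x_{0})\leq\rho$, the chain
\begin{eqnarray*}
\zeta\left(\alpha(x_{0},Tx)d(x,Tx),d(Tx,x_{0})\right)&<&d(Tx,x_{0})-\alpha(x_{0},Tx)d(x,Tx)\\
&\leq&\rho-d(x,Tx)\leq\rho-\rho=0,
\end{eqnarray*}
which contradicts the defining inequality of an $\alpha$-$\mathcal{Z}_{c}$-contraction. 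Therefore $Tx=x$, and since $x\in D_{x_{0},\rho}$ was arbitrary, $T$ fixes the disc $D_{x_{0},\rho}$.

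I expect the only genuinely delicate point to be the correct use of the admissibility hypothesis to guarantee $\alpha(x_{0},Tx)\geq 1$ at the right stage, since this is precisely what allows the factor $\alpha(x_{0},Tx)$ to be dropped in the direction that preserves the inequality (replacing $\alpha(x_{0},Tx)d(x,Tx)$ by the smaller quantity $d(x,Tx)$ in the lower bound). The rest is a routine transcription of the Theorem~\ref{thm1} argument with this extra factor inserted; one must only be careful that the two stated geometric conditions, namely $\alpha(x_{0},x)\geq 1$ on $D_{x_{0},\rho}$ and $0<d(Tx,x_{0})\leq\rho$ on $D_{x_{0},\rho}-\{x_{0}\}$, are both invoked in the $\rho\neq 0$ estimate exactly where the chain tightens to zero.
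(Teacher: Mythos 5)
Your proof is correct and follows essentially the same two-case structure ($\rho=0$ and $\rho\neq 0$) as the paper's own proof: admissibility promotes $\alpha(x_{0},x)\geq 1$ to $\alpha(x_{0},Tx)\geq 1$, and the $(\zeta_{2})$ chain then forces $\zeta\left(\alpha(x_{0},Tx)d(x,Tx),d(Tx,x_{0})\right)<0$, contradicting the $\alpha$-$\mathcal{Z}_{c}$-contractive property. If anything, your $\rho=0$ case is slightly more careful than the paper's, which asserts the contradiction ``by $(\zeta_{2})$'' without spelling out that one needs $\alpha(x_{0},Tx_{0})\geq 1$ (obtained, exactly as you do, from $\alpha(x_{0},x_{0})\geq 1$ and admissibility) to conclude $d(Tx_{0},x_{0})-\alpha(x_{0},Tx_{0})d(Tx_{0},x_{0})\leq 0$.
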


\begin{proof}
Let $\rho =0$. In this case $D_{x_{0},\rho }=\{x_{0}\}$ and the $\alpha $-$%
\mathcal{Z}_{c}$-contractive hypothesis yields $Tx_{0}=x_{0}$. Indeed, if $%
Tx_{0}\neq x_{0}$ then $d(x_{0},Tx_{0})>0$ and using the definition of an $%
\alpha $- $\mathcal{Z}_{c}$-contraction we get
\begin{equation*}
\zeta \left( \alpha (x_{0},Tx_{0})d(Tx_{0},x_{0}),d(Tx_{0},x_{0})\right)
\geq 0.
\end{equation*}%
We have a contradiction by the condition $(\zeta _{2})$. Hence it should be $%
Tx_{0}=x_{0}$.

Assume that $\rho \neq 0$. Let $x\in D_{x_{0},\rho }$ be such that $Tx\neq x$%
. By the hypothesis, we have $\alpha (x_{0},x)\geq 1$ and by the $\alpha $-$%
x_{0}$-admissible property of $T$ we get $\alpha (x_{0},Tx)\geq 1$. Then
using the condition $(\zeta _{2})$ we find
\begin{eqnarray*}
\zeta \left( \alpha (x_{0},Tx)d(Tx,x),d(Tx,x_{0})\right)
&<&d(Tx,x_{0})-\alpha (x_{0},Tx)d(Tx,x) \\
&<&\rho -d(Tx,x)\leq \rho -\rho =0,
\end{eqnarray*}%
a contradiction with the $\alpha $-$\mathcal{Z}_{c}$-contractive property of
$T$. It should be $Tx=x$ and so, $T$ fixes the disc $D_{x_{0},\rho }$.
\end{proof}

Let us consider the number $m^{\ast }(x,y)$ defined as follows:%
\begin{equation}
m^{\ast }(x,y)=\max \left\{ d(x,y),d(x,Tx),d(y,Ty),\frac{d(x,Ty)+d(y,Tx)}{2}%
\right\} \text{.}  \label{definition of the number m}
\end{equation}%
Using simulation functions and the number $m^{\ast }(x,y)$, new fixed-point
results were obtained in \cite{Padcharoen}. Also, using this number, some
discontinuity results at fixed point was given in \cite{Bisht-2017-1}. Now
we obtain a new fixed-disc result using the number $m^{\ast }(x,y)$ and the
set of simulation functions.

We give the following definition.

\begin{definition}
\label{def4} Let $(X,d)$ be a metric space, $T:X\rightarrow X$ a
self-mapping and $\zeta \in \mathcal{Z}$. $T$ is said to be a \'{C}iri\'{c}
type $\mathcal{Z}_{c}$-contraction with respect to $\zeta $ if there exist
an $x_{0}\in X$ such that the following condition holds for all $x\in X:$%
\begin{equation*}
d(Tx,x)>0\Rightarrow \zeta \left( d(Tx,x),m^{\ast }(x,x_{0})\right) \geq 0.
\end{equation*}%
Now we give the following theorem.
\end{definition}

\begin{theorem}
\label{thm3} Let $(X,d)$ be a metric space and $T:X\rightarrow X$ a \'{C}iri%
\'{c} type $\mathcal{Z}_{c}$- contraction with respect to $\zeta $ with $%
x_{0}\in X$. If the condition $0<d(Tx,x_{0})\leq \rho $ holds for all $x\in
D_{x_{0},\rho }-\left\{ x_{0}\right\} $ then $D_{x_{0},\rho }$ is a fixed
disc of $T$.
\end{theorem}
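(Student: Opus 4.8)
The plan is to follow the two-case template used in the proof of Theorem \ref{thm1}, the only genuinely new ingredient being the evaluation of the quantity $m^{\ast}(x,x_{0})$ from \eqref{definition of the number m}.

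First I would dispose of the center. Regardless of the value of $\rho$, I would apply the \'{C}iri\'{c}-type $\mathcal{Z}_{c}$-contractive hypothesis at the point $x=x_{0}$. Assuming $Tx_{0}\neq x_{0}$ gives $d(Tx_{0},x_{0})>0$, and a direct computation from \eqref{definition of the number m} collapses the maximum to $m^{\ast}(x_{0},x_{0})=d(Tx_{0},x_{0})$, since the four entries are $d(x_{0},x_{0})=0$, $d(x_{0},Tx_{0})$ (twice) and $\tfrac{1}{2}\bigl(d(x_{0},Tx_{0})+d(x_{0},Tx_{0})\bigr)$, i.e. each equals either $0$ or $d(Tx_{0},x_{0})$. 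Hence the hypothesis yields $\zeta\bigl(d(Tx_{0},x_{0}),d(Tx_{0},x_{0})\bigr)\geq 0$, which contradicts $(\zeta_{2})$. So $Tx_{0}=x_{0}$; in particular, if $\rho=0$ then $D_{x_{0},\rho}=\{x_{0}\}$ is already fixed and we are done.

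Now suppose $\rho\neq 0$ and take $x\in D_{x_{0},\rho}-\{x_{0}\}$ with $Tx\neq x$; the goal is a contradiction. The heart of the argument is to bound $m^{\ast}(x,x_{0})$. Using $Tx_{0}=x_{0}$ from the first step, the membership $x\in D_{x_{0},\rho}$ (so $d(x,x_{0})\leq\rho$) and the standing hypothesis $d(Tx,x_{0})\leq\rho$, I would check the four entries of \eqref{definition of the number m} one by one: $d(x,x_{0})\leq\rho$, $d(x_{0},Tx_{0})=0$, and $\tfrac{1}{2}\bigl(d(x,Tx_{0})+d(x_{0},Tx)\bigr)=\tfrac{1}{2}\bigl(d(x,x_{0})+d(Tx,x_{0})\bigr)\leq\rho$, whereas the remaining entry $d(x,Tx)$ satisfies $d(x,Tx)\geq\rho$ by the definition \eqref{definition of radius} of $\rho$. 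Consequently the maximum is attained at $d(x,Tx)$, that is, $m^{\ast}(x,x_{0})=d(Tx,x)$.

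With this identity in hand the conclusion is immediate: the \'{C}iri\'{c}-type contractive condition forces $\zeta\bigl(d(Tx,x),m^{\ast}(x,x_{0})\bigr)=\zeta\bigl(d(Tx,x),d(Tx,x)\bigr)\geq 0$, again contradicting $(\zeta_{2})$ because $d(Tx,x)>0$. Hence $Tx=x$ for every such $x$, and together with $Tx_{0}=x_{0}$ this shows that $T$ fixes $D_{x_{0},\rho}$. The one delicate point, and the only place where the proof departs from that of Theorem \ref{thm1}, is precisely the evaluation of $m^{\ast}(x,x_{0})$: a priori the term $d(x,Tx)$ is large (it is $\geq\rho$) and might seem to spoil the estimate, but in fact it is exactly this dominance that collapses the simulation-function inequality to the degenerate form $\zeta(s,s)\geq 0$ that $(\zeta_{2})$ rules out.
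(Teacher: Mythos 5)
Your proof is correct, and it is a genuine streamlining of the paper's argument rather than a replica of it. The paper, after simplifying $m^{\ast}(x,x_{0})$ to $\max \left\{ d(x,x_{0}),d(x,Tx),\frac{d(x,x_{0})+d(x_{0},Tx)}{2}\right\} $, runs a three-way case analysis on which entry attains the maximum and derives a contradiction in each case from $(\zeta _{2})$ combined with the bounds $d(x,x_{0})\leq \rho $, $d(Tx,x_{0})\leq \rho $ and $d(x,Tx)\geq \rho $; you instead observe that these same bounds force the maximum to be attained at $d(x,Tx)$, so the contractive condition collapses at once to $\zeta \bigl(d(Tx,x),d(Tx,x)\bigr)\geq 0$, which $(\zeta _{2})$ forbids --- one contradiction instead of three. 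Your proof also repairs a small presentational gap in the paper: in the case $\rho \neq 0$ the paper's reduction of $m^{\ast}(x,x_{0})$ (dropping the entry $d(x_{0},Tx_{0})$ and replacing $d(x,Tx_{0})$ by $d(x,x_{0})$) silently uses $Tx_{0}=x_{0}$, a fact the paper only establishes under the heading $\rho =0$, whereas you prove $Tx_{0}=x_{0}$ first, for every value of $\rho $, exactly as the later computation requires. Both arguments rely on the same ingredients (the contractive hypothesis, $(\zeta _{2})$, the definition of $\rho $, and the geometric condition $0<d(Tx,x_{0})\leq \rho $), so what your route buys is economy and a cleaner logical order, at no cost in generality.
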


\begin{proof}
Let $\rho =0$. In this case we have $D_{x_{0},\rho }=\{x_{0}\}$ and the \'{C}%
iri\'{c} type $\mathcal{Z}_{c}$-contractive hypothesis yields $Tx_{0}=x_{0}$%
. Indeed, if $Tx_{0}\neq x_{0}$ then we have $d(x_{0},Tx_{0})>0$. By the
definition of a \'{C}iri\'{c} type $\mathcal{Z}_{c}$-contraction we have
\begin{equation}
\zeta \left( d(Tx_{0},x_{0}),m^{\ast }(x_{0},x_{0})\right) \geq 0.
\label{eqn3}
\end{equation}%
Since we have
\begin{eqnarray*}
m^{\ast }(x_{0},x_{0}) &=&\max \left\{
d(x_{0},x_{0}),d(x_{0},Tx_{0}),d(x_{0},Tx_{0}),\frac{%
d(x_{0},Tx_{0})+d(x_{0},Tx_{0})}{2}\right\} \\
&=&d(x_{0},Tx_{0}),
\end{eqnarray*}%
we find%
\begin{equation*}
\zeta \left( d(Tx_{0},x_{0}),m^{\ast }(x_{0},x_{0})\right) =\zeta \left(
d(Tx_{0},x_{0}),d(x_{0},Tx_{0})\right) <0
\end{equation*}%
by the condition $(\zeta _{2})$. This is a contradiction to the equation (%
\ref{eqn3}). Hence it should be $Tx_{0}=x_{0}$.

Assume that $\rho \neq 0$. Let $x\in D_{x_{0},\rho }$ be such that $Tx\neq x$%
. Then we have
\begin{eqnarray*}
m^{\ast }(x,x_{0}) &=&\max \left\{ d(x,x_{0}),d(x,Tx),d(x_{0},Tx_{0}),\frac{%
d(x,Tx_{0})+d(x_{0},Tx)}{2}\right\} \\
&=&\max \left\{ d(x,x_{0}),d(x,Tx),\frac{d(x,x_{0})+d(x_{0},Tx)}{2}\right\}
\text{.}
\end{eqnarray*}%
By the hypothesis, we have%
\begin{equation*}
\zeta \left( d(Tx,x),m^{\ast }(x,x_{0})\right) \geq 0
\end{equation*}%
and so
\begin{equation}
\zeta \left( d(Tx,x),\max \left\{ d(x,x_{0}),d(x,Tx),\frac{%
d(x,x_{0})+d(x_{0},Tx)}{2}\right\} \right) \geq 0\text{.}  \label{eqn4}
\end{equation}%
We have the following cases:

Case 1. Let $\max \left\{ d(x,x_{0}),d(x,Tx),\frac{d(x,Tx_{0})+d(x_{0},Tx)}{2%
}\right\} =d(x,x_{0})$. From (\ref{eqn4}) we get
\begin{equation*}
\zeta \left( d(Tx,x),d(x,x_{0})\right) \geq 0.
\end{equation*}%
Using the condition $(\zeta _{2})$ and considering definition of $\rho $, we
find%
\begin{equation*}
\zeta \left( d(Tx,x),d(x,x_{0})\right) <d(x,x_{0})-d(Tx,x)<\rho
-d(Tx,x)<\rho -\rho =0.
\end{equation*}%
This is a contradiction with the \'{C}iri\'{c} type $\mathcal{Z}_{c}$%
-contractive property of $T$.

Case 2. Let $\max \left\{ d(x,x_{0}),d(x,Tx),\frac{d(x,x_{0})+d(x_{0},Tx)}{2}%
\right\} =d(x,Tx)$. From (\ref{eqn4}) we get
\begin{equation*}
\zeta \left( d(Tx,x),d(x,Tx)\right) \geq 0.
\end{equation*}%
Using the condition $(\zeta _{2})$, again we get a contradiction.

Case 3. Let $\max \left\{ d(x,x_{0}),d(x,Tx),\frac{d(x,x_{0})+d(x_{0},Tx)}{2}%
\right\} =\frac{d(x,x_{0})+d(x_{0},Tx)}{2}$. From (\ref{eqn4}) we get
\begin{equation*}
\zeta \left( d(Tx,x),\frac{d(x,x_{0})+d(x_{0},Tx)}{2}\right) \geq 0.
\end{equation*}%
Using the condition $(\zeta _{2})$, we get%
\begin{eqnarray*}
\zeta \left( d(Tx,x),\frac{d(x,x_{0})+d(x_{0},Tx)}{2}\right) &<&\frac{%
d(x,x_{0})+d(x_{0},Tx)}{2}-d(Tx,x) \\
&<&\rho -d(Tx,x)<\rho -\rho =0.
\end{eqnarray*}%
Again this is a contradiction with the \'{C}iri\'{c} type $\mathcal{Z}_{c}$%
-contractive property of $T$.

In all of the above cases we have a contradiction. Hence it should be $Tx=x$
and consequently, $T$ fixes the disc $D_{x_{0},\rho }$.
\end{proof}

\section{\textbf{A common fixed-disc theorem}}

\label{sec:2} In this section, we give a common fixed-disc result for a pair
of self-mappings $\left( T,S\right) $ of a metric space $\left( X,d\right) $%
. If $Tx=Sx=x$ for all $x\in D_{x_{0},r}$ then the disc $D_{x_{0},r}$ is
called as the common fixed disc of the pair $\left( T,S\right) $. At first,
we modify the number defined in (\ref{definition of the number m}) for a
pair of self-mappings as follows:%
\begin{equation}
m_{S,T}^{\ast }(x,y)=\max \left\{ d(Tx,Sy),d(Tx,Sx),d(Ty,Sy),\frac{%
d(Tx,Sy)+d(Ty,Sx)}{2}\right\} \text{.}  \label{common 1}
\end{equation}%
Then we give the following theorem using the numbers $m_{S,T}^{\ast }(x,y)$, $\rho $, $r$ $\in
%TCIMACRO{\U{211d} }%
%BeginExpansion
\mathbb{R}
%EndExpansion
^{+}\cup \{0\}$ defined by%
\begin{equation}
r =\inf_{x\in X}\{d(Tx,Sx)\mid Tx\neq Sx\}
\label{definition of mu}
\end{equation}

and

\begin{equation}
\mu = \min \left\{\rho, r\right\} \text{.}
\end{equation}

\begin{theorem}
\label{thm4} Let $T,S:$ $X\rightarrow X$ be two self-mappings on a metric
space. Assume that there exists $\zeta \in \mathcal{Z}$ and $x_{0}\in X$
such that
\begin{equation*}
d(Tx,Sx)>0\Rightarrow \zeta \left( d\left( Tx,Sx\right) ,m_{S,T}^{\ast
}(x,x_{0})\right) \geq 0\text{ for all }x\in X
\end{equation*}%
and
\begin{equation*}
d(Tx,x_{0})\leq \mu \text{, }d(Sx,x_{0})\leq \mu \text{ for all }x\in
D_{x_{0},\mu }\text{.}
\end{equation*}%
If $T$ is a $\mathcal{Z}_{c}$-contraction with $0<d(Tx,x_{0})\leq \rho$ for $x\in
D_{x_{0},\rho }-\left\{ x_{0}\right\}$ $($or $S$ is a $\mathcal{Z}_{c}$%
-contraction with $0<d(Sx,x_{0})\leq \rho$ for $x\in
D_{x_{0},\rho }-\left\{ x_{0}\right\}$$)$, then $D_{x_{0},\mu }$ is a common fixed disc of $T$ and $S$ in $X$.
\end{theorem}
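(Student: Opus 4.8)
The plan is to split the conclusion into the two assertions $Tx=x$ and $Sx=x$ on $D_{x_0,\mu}$, obtaining the first from Theorem~\ref{thm1} and the second from a contradiction argument driven by $(\zeta_2)$. By the parenthetical alternative in the hypothesis I may assume $T$ is the $\mathcal{Z}_c$-contraction, the case where $S$ plays that role being handled the same way with the corresponding entries of \eqref{common 1} controlled through hypothesis~2. Since the stated assumption on $T$ is exactly the hypothesis of Theorem~\ref{thm1}, that theorem gives at once that $T$ fixes $D_{x_0,\rho}$; as $\mu=\min\{\rho,r\}\le\rho$ we have $D_{x_0,\mu}\subseteq D_{x_0,\rho}$, so $Tx=x$ for every $x\in D_{x_0,\mu}$, and in particular $Tx_0=x_0$.

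Next I would record $Sx_0=x_0$. Evaluating \eqref{common 1} at $x=y=x_0$ collapses all four entries to $d(Tx_0,Sx_0)$, so $m_{S,T}^\ast(x_0,x_0)=d(Tx_0,Sx_0)$. If $Sx_0\ne x_0=Tx_0$ then $d(Tx_0,Sx_0)>0$, and the contractive hypothesis forces $\zeta\left(d(Tx_0,Sx_0),d(Tx_0,Sx_0)\right)\ge 0$, contradicting $(\zeta_2)$; hence $Sx_0=x_0$. This already disposes of the degenerate case $\mu=0$, where $D_{x_0,\mu}=\{x_0\}$.

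The main step is to show $Sx=x$ for every $x\in D_{x_0,\mu}$. Assume instead that $Sx\ne x$ for some such $x$. Because $Tx=x$ we get $d(Tx,Sx)=d(x,Sx)>0$, and since $Tx\ne Sx$ the definition of $r$ gives $d(Tx,Sx)\ge r\ge\mu$. I would then compute $m_{S,T}^\ast(x,x_0)$ using $Tx=x$, $Tx_0=Sx_0=x_0$, the membership $x\in D_{x_0,\mu}$, and the geometric bound $d(Sx,x_0)\le\mu$ from hypothesis~2: the four entries of \eqref{common 1} become $d(x,x_0)\le\mu$, the term $d(Tx,Sx)$, the value $0$, and $\tfrac12\left(d(x,x_0)+d(x_0,Sx)\right)\le\mu$. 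Thus the second entry dominates and $m_{S,T}^\ast(x,x_0)=d(Tx,Sx)$. Substituting this into the hypothesis and invoking $(\zeta_2)$ yields $0\le\zeta\left(d(Tx,Sx),d(Tx,Sx)\right)<0$, a contradiction. Hence $Sx=x$ on $D_{x_0,\mu}$, and combined with $Tx=x$ this exhibits $D_{x_0,\mu}$ as a common fixed disc of $T$ and $S$.

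The point I expect to be delicate is the exact identification $m_{S,T}^\ast(x,x_0)=d(Tx,Sx)$ rather than a one-sided inequality, since the contradiction requires the two arguments of $\zeta$ to be equal. This forces me to use simultaneously that $T$ already fixes the disc, the normalization $Sx_0=x_0$, the uniform estimate $d(Sx,x_0)\le\mu$, and the inequality $\mu\le r$ that makes $d(Tx,Sx)$ the largest of the four competing quantities; tracking all four entries under these substitutions is the one routine computation that must be carried out carefully.
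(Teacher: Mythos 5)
Your proposal is correct and follows essentially the same route as the paper: both arguments hinge on Theorem~\ref{thm1} applied to $T$, on the collapse of $m_{S,T}^{\ast}(x,x_{0})$ to $d(Tx,Sx)$ (using the geometric hypothesis $d(Tx,x_{0})\leq \mu$, $d(Sx,x_{0})\leq \mu$ together with $d(Tx,Sx)\geq r\geq \mu$), and on the $(\zeta _{2})$ contradiction at $x_{0}$ and at a general point of the disc. The only difference is organizational: the paper first proves $Tx=Sx$ for every $x\in D_{x_{0},\mu }$ and invokes Theorem~\ref{thm1} at the end to upgrade coincidence points to common fixed points, whereas you invoke Theorem~\ref{thm1} at the outset and then deduce $Sx=x$ directly; the underlying computations are identical.
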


\begin{proof}
Let $\mu =0$. In this case we have $D_{x_{0},\mu }=\{x_{0}\}$ and by the
hypothesis, we get $Tx_{0}=Sx_{0}=x_{0}$.

Let $\mu >0$. At first, we show that $x_{0}$ is a coincidence point of $T$
and $S$, that is, $Tx_{0}=Sx_{0}$. Assume that $Tx_{0}\neq Sx_{0}$ and so $%
d(Tx_{0},Sx_{0})>0$. Then we have
\begin{equation*}
\zeta \left( d\left( Tx_{0},Sx_{0}\right) ,m_{S,T}^{\ast
}(x_{0},x_{0})\right) =\zeta \left( d(Tx_{0},Sx_{0}),d(Tx_{0},Sx_{0})\right)
\text{.}
\end{equation*}%
But this is a contradiction by the condition $(\zeta _{2})$. Hence we find $%
Tx_{0}=Sx_{0}$, that is, $x_{0}$ is a coincidence point of $T$ and $S$. If $%
T $ is a $\mathcal{Z}_{c}$-contraction $($or $S$ is a $\mathcal{Z}_{c}$%
-contraction$)$ then we have $Tx_{0}=x_{0}$ $($or $Sx_{0}=x_{0})$ and $%
Tx_{0}=Sx_{0}=x_{0}$.

Let $x\in D_{x_{0},\mu }$ be an arbitrary point. Suppose $Tx\neq Sx$ and so $%
d(Tx,Sx)>0$. Using the hypothesis $d(Tx,x_{0})\leq \mu $, $d(Sx,x_{0})\leq
\mu $ for all $x\in D_{x_{0},\mu }$ and considering the definition of $\mu $
we get
\begin{eqnarray*}
\zeta \left( d\left( Tx,Sx\right) ,m_{S,T}^{\ast }(x,x_{0})\right) &=&\zeta
\left( d\left( Tx,Sx\right) ,\max \left\{
\begin{array}{c}
d(Tx,Sx_{0}),d(Tx,Sx), \\
d(Tx_{0},Sx_{0}),\frac{d(Tx,Sx_{0})+d(Tx_{0},Sx)}{2}%
\end{array}%
\right\} \right) \\
&=&\zeta \left( d\left( Tx,Sx\right) ,\max \left\{
\begin{array}{c}
d(Tx,x_{0}),d(Tx,Sx), \\
0,\frac{d(Tx,x_{0})+d(x_{0},Sx)}{2}%
\end{array}%
\right\} \right) \\
&=&\zeta \left( d\left( Tx,Sx\right) ,d(Tx,Sx)\right) .
\end{eqnarray*}%
This leads a contradiction by the condition $(\zeta _{2})$. Therefore $x$ is
a coincidence point of $T$ and $S$.

Now, if $u\in D_{x_{0},\mu}$ is a fixed point of $T$ then clearly $u$ is also
a fixed point of $S$ and vice versa. If $T$ is a $\mathcal{Z}_{c}$%
-contraction $($or $S$ is a $\mathcal{Z}_{c}$-contraction$)$ then by Theorem %
\ref{thm1}, we have $Tx=x$ (or $Sx=x$) and hence $Tx=Sx=x$ for all $x\in
D_{x_{0},\mu}$. That is, the disc $D_{x_{0},\mu}$ is a common fixed-disc of $T$
and $S$.
\end{proof}

\begin{example}
Let us consider the usual metric space $X=%
%TCIMACRO{\U{211d} }%
%BeginExpansion
\mathbb{R}
%EndExpansion
$ and the self-mapping $T_{1}$ defined in Example \ref{exm1}. Define the
self-mapping $T_{4}:%
%TCIMACRO{\U{211d} }%
%BeginExpansion
\mathbb{R}
%EndExpansion
\rightarrow
%TCIMACRO{\U{211d} }%
%BeginExpansion
\mathbb{R}
%EndExpansion
$ by%
\begin{equation*}
T_{4}x=\left\{
\begin{array}{ccc}
x & ; & x\in \left[ -3,3\right] \\
3x & ; & x\in \left( -\infty ,-3\right) \cup \left( 3,\infty \right)%
\end{array}%
\right. .
\end{equation*}%
Clearly, we have $\mu =$ $1$. Then the pair $\left( T_{1},T_{4}\right) $
satisfies the conditions of Theorem \ref{thm4} for $\mu =$ $1$, $x_{0}=0$ and the
function $\zeta _{6}:[0,\infty )^{2}\rightarrow
%TCIMACRO{\U{211d} }%
%BeginExpansion
\mathbb{R}
%EndExpansion
$ defined as $\zeta _{6}(t,s)=\frac{3}{4}s-t$. Hence the disc $D_{0,1}=\left[
-1,1\right] $ is the common fixed disc of the self-mappings $T_{1}$ and $%
T_{4}$.
\end{example}

\section{\textbf{Conclusion and future work}}

In this paper, we have obtained new fixed-disc results presenting a new
approach via simulation functions. Using similar approaches, it can be
studied new fixed-disc results on metric and some generalized metric spaces.
As a future work, it is a meaningful problem to investigate some conditions
to exclude the identity map of $X$ from Theorem \ref{thm1}, Theorem \ref%
{thm2}, Theorem \ref{thm3} and related results. On the other hand, it is
worth to mention that most of the popular activation functions used in
neural networks are those mappings having fixed-discs. For example,
exponential linear unit (ELU) function defined by%
\begin{equation*}
f(x)=\left\{
\begin{array}{ccc}
x & ; & \text{if }x\geq 0 \\
\alpha (\exp (x)-1) & ; & \text{if }x<0%
\end{array}%
,\right.
\end{equation*}%
where $\alpha $ is constant of ELUs, and S-shaped rectified linear unit
function (SReLU) defined by%
\begin{equation*}
h(x_{i})=\left\{
\begin{array}{ccc}
t_{i}^{r}+a_{i}^{r}(x-t_{i}^{r}) & ; & x_{i}\geq t_{i}^{r} \\
x_{i} & ; & t_{i}^{r}>x_{i}>t_{i}^{l} \\
t_{i}^{l}+a_{i}^{l}(x-t_{i}^{l}) & ; & x_{i}\leq t_{i}^{l}%
\end{array}%
\right. ,
\end{equation*}%
where $\left\{ t_{i}^{r},a_{i}^{r},a_{i}^{l},t_{i}^{l}\right\} $ are four
learnable parameters used to model an individual SReLU activation unit, are
well-known activation functions (see \cite{Clevert} and \cite{Jin} for more
details). Therefore, it is important to study of features of mappings having
fixed-discs.

%\section*{Acknowledgment}

%This work is supported by Bal\i kesir University under the grant no. 2018/01.

\end{document}